\newtheorem{theorem}{Theorem}[section]
\newtheorem{definition}{Definition}[section]
\newtheorem{lemma}{Lemma}[section]
\newtheorem{corollary}{Corollary}[section]
\numberwithin{equation}{section}
\begin{document}
\title{Change of basis for generalized Chebyshev Koornwinder's type polynomials of first kind}
\author{Mohammad A. AlQudah}
\address{School of Basic Sciences and Humanities, German Jordanian University, Amman 11180, Jordan}
\email{mohammad.qudah@gju.edu.jo}
\date{\today}
%% If there is another author uncomment and edit the following.
\author{Maalee N. AlMheidat}
\address{Department of Basic Sciences, University of Petra, Amman 11196 Jordan}
\email{malmheidat@uop.edu.jo}

\begin{abstract}
In this paper an explicit form of generalized Chebyshev Koornwinder's type polynomial of first kind in terms of the Bernstein basis of fixed degree $n$ is provided. Moreover, we investigate generalized Chebyshev Koornwinder's type polynomials of first kind and Bernstein polynomials change of bases.
\end{abstract}      % the abstract
\maketitle
%\keywords{ Chebyshev-Koornwinder polynomials, change of basis, Bernstein polynomials, Beta function, Dirac measure}        % the keywords
%\subjclass[2000]{33C45, 33C50, 42C05, 05A10, 33B15}      % MR(2000) Subject Classification
%\baselineskip 15pt

\section{Introduction}
Approximation is important to many numerical methods, as many arbitrary continuous functions can be approximated by polynomials. On the other hand, polynomials can be characterized in many different bases. Every type of polynomial basis has its strength, where by suitable choice of the basis, numerous problems can be solved and many complications can be removed.

Characterization of the generalized Chebyshev Koornwinder's type polynomials of first and second kind using Bernestin basis were discussed in \cite{alqudah:Characterization, Alqudahn:CharaII} respectively. On the other side, Rababah \cite{rababah4} investigated  the transformation of classical Chebyshev Bernstein polynomial basis.

In this paper,  we generalize the results in \cite{rababah4}, where an explicit closed form of generalized Chebyshev-Koornwinder's type polynomials of first kind (ChK-1) of degree $r\leq n$ expressed in terms of Bernstein basis of fixed degree $n,$ and a closed forms of generalized Chebyshev Koornwinder's polynomials and Bernstein polynomials bases transformations are provided.
%%%%%%%%%%%%%%%%%%%%%
\subsection{Bernstein polynomials}
The Bernstein polynomials have been studied thoroughly, there exist many great enduring works on theses polynomials \cite{Farouki4}. 
The $n+1$ Bernstein polynomials $B_{k}^{n}(x)$ of degree $n,$ $x\in [0,1],$ are defined by
$B_{k}^{n}(x)=\frac{n!}{k!(n-k)!} x^{k}(1-x)^{n-k}, \hspace{.05in} k=0,1,\dots,n.$

Bernstein polynomials are the standard basis for the B\'{e}zier  curves and surfaces in Computer Aided Geometric Design (CAGD). Many applications that involve two or more B\'{e}zier curves require all involved B\'{e}zier  curves to have the same degree, so we use the degree elevation defined by \cite{Farouki3}
to write Bernstein polynomials of degree $r$ where $r\leq n$ in terms of the Bernstein polynomials of degree $n$ 
\begin{equation}\label{ber-elv} B_{k}^{r}(x)=\sum_{i=k}^{n-r+k}\frac{\binom{r}{k}\binom{n-r}{i-k}}{\binom{n}{i}}B_{i}^{n}(x),\hspace{.1in}k=0,1,\dots,r.
\end{equation}
Although higher degree B\'{e}zier curves require more time to process, they do have more flexibility for designing shapes, where the basis are known to be optimally stable. So, it would be very helpful to increase the degree of a B\'{e}zier curve without changing its shape  \cite{Farin,Hoschek,Yamaguchi},.

Farouki \cite{Farouki4} surveyed Bernstein basis properties, he described some of the key properties and algorithms associated with the Bernstein basis. Bernstein polynomials remarkable properties \cite{Farouki4} make them significant for the development of B\'{e}zier curves and surfaces in CAGD. 
However, the Bernstein polynomials are not orthogonal and could not be used effectively in the least-squares approximation \cite{Rice}. the calculations performed in finding the least-square approximation polynomial of degree $m$ do not decrease the calculations to obtain the least-squares approximation polynomial of degree $m+1.$
Since then, the method of least square approximation accompanied by orthogonality polynomials has been presented and examined.

In the following, we define least-square approximations of a continuous function $f(x)$ by using polynomials with standard power basis $1,x,x^{2},\dots,x^{n}.$

\begin{definition}
For a continuous function $f(x)$ defined on $[0,1],$ the least square approximation requires finding a least-squares polynomial $p_{n}^{*}(x)=\sum_{k=0}^{n}a_{k}\phi_{k}(x)$
that minimizes the error $E(a_{0},a_{1},\dots,a_{n})=\int_{0}^{1}[f(x)-p_{n}^{*}(x)]^{2}dx.$
\end{definition}

A necessary condition for $E(a_{0},a_{1},\dots,a_{n})$ to have a minimum over all values $a_{0},a_{1},\dots,a_{n},$ is 
$\displaystyle 0=\frac{\partial E}{\partial a_{k}}, k=0,\dots,n.$
Thus, for $i=0,1,\dots,n,$ $a_{i}$ that minimize $\left\|f(x)-\sum_{k=0}^{n}a_{k}\phi_{k}(x)\right\|_{2}$ satisfy 
$$\int_{0}^{1}f(x)\phi_{i}(x)dx=\sum_{k=0}^{n}a_{k}\int_{0}^{1}\phi_{k}(x)\phi_{i}(x)dx,$$
which gives a system of $(n+1)$ equations in $(n+1)$ unknowns: $a_{i},$ $i=0,\dots,n,$ called normal equations. Those $(n+1)$ unknowns of the least-squares polynomial $p_{n}^{*}(x),$  can be found by solving the normal equations.
By choosing $\phi_{i}(x)=x^{i},$ as a basis, then 
$$\int_{0}^{1}f(x)x^{i}dx=\sum_{k=0}^{n}a_{k}\int_{0}^{1}x^{i+k}dx=\sum_{k=0}^{n}\frac{a_{k}}{i+k+1}.$$

The coefficients matrix of the normal equations is Hilbert matrix,  $H_{ij}\equiv(i+j-1)^{-1},$ $i,j=1,\dots,n,$ which has round-off error difficulties and notoriously ill-conditioned for even modest values of $n.$
However, such computations can be made computationally effective by using orthogonal polynomials. Thus,  choosing $\{\phi_{0}(x),\phi_{1}(x),\dots,\phi_{n}(x)\}$ to be orthogonal simplifies the least-squares approximation problem. The coefficients matrix of the normal equations will be diagonal, which simplifies calculations and gives a closed form for $a_{i},i=0,1,\dots,n.$ Moreover, once $p_{n}(x)$ is known, it is only needed to compute $a_{n+1}$ to get $p_{n+1}(x),$ which turns out to be computationally efficient.
to get $p_{n+1}(x),$ which turns out to be computationally efficient. See \cite{Rice} for more details on the least squares approximations.

\subsection{Factorials and double factorials}
The double factorial of an integer $m$ is given by
$(2m-1)!!=(2m-1)(2m-3)(2m-5)\dots(3)(1)$ if $m$ is odd, and $m!!=(m)(m-2)(m-4)\dots(4)(2)$ if $m$ is even, where $0!!=(-1)!!=1.$  
\begin{definition}For an integer $n,$ the double factorial is defined as
\begin{equation}\label{douvle-factrrial} n!!=\left\{\begin{array}{ll}2^{\frac{n}{2}}(\frac{n}{2})! & \text{if $n$ is even} \\\frac{n!}{2^{\frac{n-1}{2}}(\frac{n-1}{2})!} & \text{if $n$ is odd} \end{array}\right..\end{equation}
\end{definition}
From the definition \ref{douvle-factrrial}, and the fact $(1/2)!=\sqrt{\pi}/2$ we can derive the factorial of an integer minus half as 
\begin{equation*}\label{n-half}
\left(n-\frac{1}{2}\right)!=\frac{n!(2n-1)!!}{(2n)!!}\sqrt{\pi}.
\end{equation*}
%%%%%%%%%%%%%%%%%%%%%%%%%%%%%%%%%%%%%%%%%%%%%%%%%%%%%%%%%%%%%%%%%%%%%%%%%%%%%%%%%%%%%%%%%%%%%
\subsection{The generalized Chebyshev-I Koornwinder's type polynomials}
For $M,N\geq 0,$ the generalized Chebyshev-I  Koornwinder's type polynomials  \cite{alqudah:Characterization}  of degree $n$
\begin{equation}\label{gen-Chebyshev-I-ccomb}
\mathscr{T}_{n}^{(M,N)}(x)=\frac{(2n-1)!!}{(2n)!!}T_{n}(x)+\sum_{k=0}^{n} \frac{(2k)!\lambda_{k}}{2^{2k}(k!)^{2}} T_{k}(x), \hspace{.1in}n=0,\dots,\infty,
\end{equation}
are orthogonal on the interval $[-1,1]$ with respect to the measure 
$\displaystyle\frac{\sqrt{1-x^{2}}}{\pi}dx+M\delta_{-1}+N\delta_{1},$ where $\delta_{x}$ is a singular Dirac measure,

$T_n(x)$ are the Chebyshev polynomials of the first kind (Chebyshev-I), and
\begin{equation}\label{lmda}\lambda_{k}= \frac{4M}{(2k-3)(k-1)!}+ \frac{4N}{(2k-3)(k-1)!}+\frac{4MN}{(k-1)!(k-2)!}.\end{equation}
%%%%%%%%%%%%%%%%%%%%%%%%%%%%%%%%%%%%%%%%%%%%%%%%%%%%%%%%%%%%%%%%%%%%%%%%%%%%%%%%%%%%%%%%%%%%%

The following theorem \cite{alqudah:Characterization} shows how generalized Chebyshev-I  Koornwinder's type polynomials $\mathscr{T}_{r}^{(M,N)}(x)$ of degree $r$ can be written explicitly as a linear combination of the Bernstein polynomial basis of degree $r.$ 

\begin{theorem}\label{gen-jacinBer form}\cite{alqudah:Characterization}(M. AlQudah)
For $M,N\geq 0,$ generalized Chebyshev-I  Koornwinder's type polynomials $\mathscr{T}_{r}^{(M,N)}(x)$ of degree $r$ have the following Bernstein representation,
\begin{equation}\label{gen-jac-inBer-r}
\mathscr{T}_{r}^{(M,N)}(x)=\frac{(2r-1)!!}{(2r)!!}\sum_{i=0}^{r}(-1)^{r-i}\eta_{i,r}B_{i}^{r}(x)+\sum_{k=0}^{r}
\frac{(2k)!\lambda_{k}}{2^{2k}(k!)^{2}}\sum_{j=0}^{k}(-1)^{k-j}\eta_{j,k}B_{j}^{k}(x),
\end{equation}
where $\lambda_{k}$ defined in \eqref{lmda}, $\eta_{i,r}=\frac{\binom{2r}{r}\binom{2r}{2i}}{2^{2r}\binom{r}{i}}, i=0,1,\dots,r,$ and
$\eta_{0,r}=\frac{1}{2^{2r}}\binom{2r}{r}.$\\
Moreover, the coefficients $\eta_{i,r}$ satisfy the recurrence relation
\begin{equation}\label{recur}\eta_{i,r}=\frac{(2r-2i+1)}{(2i-1)}\eta_{i-1,r}, \hspace{.1in}i=1,\dots,r.\end{equation}
\end{theorem}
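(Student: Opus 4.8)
The plan is to peel the statement down to one elementary identity and then restore it by linearity. The defining sum \eqref{gen-Chebyshev-I-ccomb} and the claimed expansion \eqref{gen-jac-inBer-r} carry identical scalar weights term by term --- the weight $\frac{(2r-1)!!}{(2r)!!}$ on the top-degree piece and $\frac{(2k)!\lambda_{k}}{2^{2k}(k!)^{2}}$ on each lower piece --- so it is enough to establish, for every degree $m$, the single Bernstein representation
\begin{equation*}
T_{m}(x)=\sum_{i=0}^{m}(-1)^{m-i}\eta_{i,m}B_{i}^{m}(x),\qquad \eta_{i,m}=\frac{\binom{2m}{m}\binom{2m}{2i}}{2^{2m}\binom{m}{i}},
\end{equation*}
with the shift aligning the Chebyshev argument to the interval $[0,1]$ of the Bernstein basis being understood. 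Applying this with $m=r$ to the leading term and with $m=k$ for $k=0,\dots,r$ inside the sum, and then factoring the common scalars back out, reproduces \eqref{gen-jac-inBer-r} line by line. The stated value $\eta_{0,m}=\frac{1}{2^{2m}}\binom{2m}{m}$ is simply the case $i=0$, since $\binom{2m}{0}=\binom{m}{0}=1$.

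First I would expand $T_{m}$ in the power basis, $T_{m}(x)=\sum_{\ell=0}^{m}c_{\ell}^{(m)}x^{\ell}$, using the classical explicit coefficients of the Chebyshev polynomial of the first kind. I would then insert the monomial-to-Bernstein conversion
\begin{equation*}
x^{\ell}=\sum_{i=\ell}^{m}\frac{\binom{i}{\ell}}{\binom{m}{\ell}}\,B_{i}^{m}(x),
\end{equation*}
the natural counterpart of the degree-elevation formula \eqref{ber-elv}, and interchange the two summations. Reading off the coefficient of $B_{i}^{m}(x)$ then turns the whole problem into verifying the closed-form evaluation
\begin{equation*}
\sum_{\ell=0}^{i}c_{\ell}^{(m)}\frac{\binom{i}{\ell}}{\binom{m}{\ell}}=(-1)^{m-i}\,\frac{\binom{2m}{m}\binom{2m}{2i}}{2^{2m}\binom{m}{i}}.
\end{equation*}

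The crux of the argument --- and where I expect essentially all of the difficulty to sit --- is this last sum. It is a terminating hypergeometric sum in $\ell$, and collapsing its left side to the compact product $\binom{2m}{2i}/\binom{m}{i}$ (carrying the sign and the $2^{-2m}\binom{2m}{m}$ normalization) calls for a Chu--Vandermonde-type contraction rather than any term-by-term rearrangement. I would therefore either recognize it as a ${}_2F_1$ evaluable by the Chu--Vandermonde theorem or run an auxiliary induction on $i$. As an independent route that sidesteps the explicit coefficients $c_{\ell}^{(m)}$ altogether, one can instead induct on $m$ via the three-term recurrence $T_{m+1}(x)=2xT_{m}(x)-T_{m-1}(x)$: multiplying a Bernstein expansion by the linear factor $x$ and re-elevating through \eqref{ber-elv} acts on the coefficient vector in a controlled way, and one checks that this action carries $(-1)^{m-i}\eta_{i,m}$ to $(-1)^{m+1-i}\eta_{i,m+1}$.

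Finally, the recurrence \eqref{recur} needs no summation at all; it drops straight out of the closed form. Forming the ratio $\eta_{i,r}/\eta_{i-1,r}$ and cancelling, the factor $\binom{2r}{2i}/\binom{2r}{2i-2}=\frac{(2r-2i+2)(2r-2i+1)}{2i(2i-1)}$ against $\binom{r}{i-1}/\binom{r}{i}=\frac{i}{r-i+1}$ (and noting $2r-2i+2=2(r-i+1)$) collapses to $\frac{2r-2i+1}{2i-1}$, exactly the claimed relation. The same recurrence then serves as the induction step that generates every $\eta_{i,r}$ from $\eta_{0,r}$, which closes the argument.
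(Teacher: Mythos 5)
The paper offers no proof of this theorem at all --- it is imported verbatim from \cite{alqudah:Characterization} --- so there is no internal argument to compare your route against; I can only judge the attempt on its own terms. Your architecture is the natural one: the scalar weights $\frac{(2r-1)!!}{(2r)!!}$ and $\frac{(2k)!\lambda_{k}}{2^{2k}(k!)^{2}}$ do match term by term between \eqref{gen-Chebyshev-I-ccomb} and \eqref{gen-jac-inBer-r}, so linearity legitimately reduces everything to a Bernstein representation of a single (shifted) Chebyshev polynomial; your derivation of the recurrence \eqref{recur} by forming the ratio $\eta_{i,r}/\eta_{i-1,r}$ is complete and correct; and you are right to flag the implicit shift to $[0,1]$, which the paper never makes explicit.

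The gap is that the one step carrying all of the mathematical content --- the closed-form evaluation $\sum_{\ell=0}^{i}c_{\ell}^{(m)}\binom{i}{\ell}/\binom{m}{\ell}=(-1)^{m-i}\eta_{i,m}$ --- is only announced, not performed; ``recognize it as a ${}_2F_1$ evaluable by Chu--Vandermonde or run an auxiliary induction'' is a plan, not a proof, and without executing it the theorem is not established. Worse, if you actually run the check you will find that the target identity fails as written: the classical result (Rababah \cite{rababah4}) is $T_{m}^{*}(x)=\sum_{i=0}^{m}(-1)^{m-i}\frac{\binom{2m}{2i}}{\binom{m}{i}}B_{i}^{m}(x)$, whereas the stated $\eta_{i,m}$ carries the extra constant $2^{-2m}\binom{2m}{m}=\frac{(2m-1)!!}{(2m)!!}$, which duplicates the prefactor already sitting outside the sum in \eqref{gen-jac-inBer-r}. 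Already at $m=1$, $i=0$, your left-hand sum (with $T_{1}^{*}(x)=2x-1$, so $c_{0}^{(1)}=-1$) evaluates to $-1$, while $(-1)^{1-0}\eta_{0,1}=-\tfrac{1}{2}$. So your reduction, carried out faithfully, proves a statement differing from \eqref{gen-jac-inBer-r} by a factor of $\frac{(2m-1)!!}{(2m)!!}$ in each term; you would need to detect and resolve this normalization mismatch between the stated $\eta_{i,r}$ and the definition \eqref{gen-Chebyshev-I-ccomb} before the argument can close.
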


\section{Change of Bases}
In this section we provide a closed form for the transformation matrix of generalized Chebyshev-I Koornwinder's polynomials basis into Bernstein polynomials basis, and for Bernstein polynomials basis into generalized Chebyshev-I Koornwinder's polynomials basis.

\subsection{Generalized Chebyshev-I to Bernstein}
Rababah \cite{rababah4} provided some results concerning the classical Chebyshev case. In the following theorem we generalize the procedure in \cite{rababah4}. The following theorem will be used to combine the superior performance of the least-squares of the generalized Chebyshev-I polynomials with the geometric insights of the Bernstein polynomials basis.
\begin{theorem}\label{trans}
The entries $M_{i,r}^{n}, i,r=0,1,\dots,n$ of the matrix transformation of the generalized Chebyshev-I polynomial basis into Bernstein polynomial basis of degree $n$ are
given by
\begin{equation}\label{gen-mu}
\begin{aligned}
M_{i,r}^{n}=&\binom{r}{i}^{-1}\frac{(2r)!}{2^{2r}(r!)^{2}}\sum_{k=\max(0,i+r-n)}^{\min(i,r)}(-1)^{r-k}\binom{n-r}{i-k}\binom{r-\frac{1}{2}}{k}\binom{r-\frac{1}{2}}{r-k}\\
&+\sum_{k=0}^{r}\lambda_{k}\binom{k}{i}^{-1}\frac{(2k)!}{2^{2k}(k!)^{2}}\sum_{j=\max(0,i+k-n)}^{\min(i,k)}(-1)^{k-j}\binom{n-k}{i-j}\binom{k-\frac{1}{2}}{j}\binom{k-\frac{1}{2}}{k-j}
.\end{aligned}
\end{equation}
\end{theorem}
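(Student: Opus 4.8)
The quantity $M_{i,r}^n$ is by definition the coefficient of $B_i^n(x)$ when $\mathscr{T}_r^{(M,N)}(x)$ is expanded in the degree-$n$ Bernstein basis, so the plan is to produce that expansion explicitly. The starting point is Theorem \ref{gen-jacinBer form}: equation \eqref{gen-jac-inBer-r} already expresses $\mathscr{T}_r^{(M,N)}(x)$ in the Bernstein basis of degree $r$ (the first sum) and in the Bernstein bases of the various degrees $k\le r$ (the second sum). What remains is to promote each $B_l^r(x)$ and each $B_m^k(x)$ to the common degree $n$, and this is exactly what the degree-elevation formula \eqref{ber-elv} does.

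First I would substitute \eqref{ber-elv} into both sums of \eqref{gen-jac-inBer-r}, writing $B_l^r(x)=\sum_{i}\frac{\binom{r}{l}\binom{n-r}{i-l}}{\binom{n}{i}}B_i^n(x)$ and $B_m^k(x)=\sum_{i}\frac{\binom{k}{m}\binom{n-k}{i-m}}{\binom{n}{i}}B_i^n(x)$, and then interchange the order of summation so as to collect all contributions to a fixed $B_i^n(x)$. The inner index ranges in \eqref{gen-mu} arise directly from this interchange: $B_i^n(x)$ appears in the elevation of $B_l^r(x)$ only when $l\le i\le n-r+l$, equivalently $\max(0,i+r-n)\le l\le\min(i,r)$, and likewise $\max(0,i+k-n)\le j\le\min(i,k)$ for the second sum.

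The substantive step is to recast the emerging coefficients into the half-integer binomial form of the statement. Two conversions are needed. From the double-factorial definition \eqref{douvle-factrrial} one has $\frac{(2r-1)!!}{(2r)!!}=\frac{(2r)!}{2^{2r}(r!)^2}$, matching the prefactor in \eqref{gen-mu}. Then, using the half-integer factorial formula $\left(m-\tfrac12\right)!=\frac{m!(2m-1)!!}{(2m)!!}\sqrt{\pi}$, a short computation gives the key identity
\begin{equation*}
\binom{r-\tfrac12}{k}\binom{r-\tfrac12}{r-k}=\frac{\binom{2r}{r}}{2^{2r}}\binom{2r}{2k}=\eta_{k,r}\binom{r}{k},
\end{equation*}
where the final equality merely unfolds $\eta_{k,r}=\frac{\binom{2r}{r}\binom{2r}{2k}}{2^{2r}\binom{r}{k}}$. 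This is precisely the factor I need: in the elevated first sum the binomial $\binom{r}{l}$ coming from \eqref{ber-elv} cancels the $\binom{r}{l}^{-1}$ hidden in $\eta_{l,r}$, leaving $\eta_{l,r}\binom{r}{l}=\binom{r-\tfrac12}{l}\binom{r-\tfrac12}{r-l}$, while the reciprocal binomial $\binom{n}{i}^{-1}$ is carried along from the denominator in \eqref{ber-elv}. The identical manipulation with $r$ replaced by $k$ disposes of the second sum, the factor $\lambda_k\frac{(2k)!}{2^{2k}(k!)^2}$ riding along unchanged. Reading off the coefficient of $B_i^n(x)$ then yields the expression \eqref{gen-mu}.

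I expect the principal difficulty to be organizational rather than conceptual: deriving the half-integer binomial identity cleanly from the Gamma/double-factorial relations, and keeping the two changes of summation order — together with their truncated index ranges at the boundaries $i+r-n$ and $i+k-n$ — correct and mutually consistent. No convergence or orthogonality input is required; once the key identity is established, the remainder is bookkeeping.
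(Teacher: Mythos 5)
Your proposal is correct and follows essentially the same route as the paper's own proof: substitute the degree elevation \eqref{ber-elv} into the Bernstein representation \eqref{gen-jac-inBer-r} of Theorem \ref{gen-jacinBer form}, interchange the order of summation to get the truncated index ranges, and use the identity $\binom{r-\frac{1}{2}}{k}\binom{r-\frac{1}{2}}{r-k}=\frac{1}{2^{2r}}\binom{2r}{r}\binom{2r}{2k}=\eta_{k,r}\binom{r}{k}$ to recast the coefficients. One remark: your bookkeeping (correctly) leaves a factor $\binom{n}{i}^{-1}$, whereas \eqref{gen-mu} as printed shows $\binom{r}{i}^{-1}$ and $\binom{k}{i}^{-1}$; since Corollary \ref{gen-rabab2} also carries $\binom{n}{i}^{-1}$, this is a typo in the theorem statement rather than a gap in your argument.
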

\begin{proof}
Write $p_{n}(x)$ as a linear combination of the Bernstein polynomial basis as and as a linear combination of the generalized Chebyshev-I polynomials as follows
$p_{n}(x)=\sum_{r=0}^{n} c_{r}B_{r}^{n}(x)=\sum_{i=0}^{n}d_{i}\mathscr{T}_{i}^{(M,N)}(x).$

We are interested in matrix $M,$ 
which transforms the generalized Chebyshev-I coefficients $\{d_{i}\}_{i=0}^{n}$ into the Bernstein coefficients $\{c_{r}\}_{r=0}^{n},$   
$c_{i}=\sum_{r=0}^{n}M_{i,r}^{n}d_{r}.$

By using Theorem \ref{gen-jacinBer form} to write the generalized Chebyshev-I polynomials \eqref{gen-Chebyshev-I-ccomb} of degree $r\leq n$ in terms of Bernstein polynomial basis of degree $n$ as 
\begin{equation}\label{fun-berns}\mathscr{T}_{r}^{(M,N)}(x)=\sum_{i=0}^{n}N_{r,i}^{n}B_{i}^{n}(x), \hspace{.1in} r=0,1,\dots,n,\end{equation}
where $N_{r,i}^{n}$ the entries of the $(n+1)\times(n+1)$ basis conversion matrix $N.$ Thus, the elements of $c$ can be written in the form 

$c_{i}=\sum_{r=0}^{n}d_{r}N_{r,i}^{n}.$

Now, we need to write each Bernstein polynomial of degree $r$ where $r\leq n$ in terms of Bernstein polynomials of degree $n.$ By substituting the degree elevation \eqref{ber-elv} 
into \eqref{gen-jac-inBer-r} and rearrange the order of summations, we find that the entries of the matrix $N=M^{T}$ for $r=0,1,\dots,n$ are given by
\begin{align*}
N_{r,i}^{n}=&\binom{r}{i}^{-1}\frac{(2r)!}{2^{2r}(r!)^{2}}\sum_{k=\max(0,i+r-n)}^{\min(i,r)}(-1)^{r-k}\binom{n-r}{i-k}\binom{r-\frac{1}{2}}{k}\binom{r-\frac{1}{2}}{r-k}\\
&+\sum_{k=0}^{r}\lambda_{k}\binom{k}{i}^{-1}\frac{(2k)!}{2^{2k}(k!)^{2}}\sum_{j=\max(0,i+k-n)}^{\min(i,k)}(-1)^{k-j}\binom{n-k}{i-j}\binom{k-\frac{1}{2}}{j}\binom{k-\frac{1}{2}}{k-j}
.\end{align*}
\end{proof}

In the following Corollary, we express the generalized Chebyshev-I polynomials $\mathscr{T}_{r}^{(M,N)}(x)$ of degree $r\leq n$ in terms of
the Bernstein basis of fixed degree $n.$

\begin{corollary}\label{gen-rabab2}
The generalized Chebyshev-I polynomials $\mathscr{T}_{0}^{(M,N)}(x),\dots,\mathscr{T}_{n}^{(M,N)}(x)$ of degree less than or equal to $n$ can be expressed in the Bernstein basis of fixed degree $n$ by the following formula
\begin{equation*}\mathscr{T}_{r}^{(M,N)}(x)=\sum_{i=0}^{n}N_{r,i}^{n}B_{i}^{n}(x), \hspace{.1in} r=0,1,\dots,n,\end{equation*}
where
$N_{r,i}^{n}=
\mu_{r,i}^{n}
+\sum_{k=0}^{r}\lambda_{k}
\mu_{k,i}^{n}$
and $\mu_{r,i}^{n}$ defined as
$$\mu_{r,i}^{n}=\frac{(2r)!}{2^{2r}(r!)^{2}}\sum_{k=\max(0,i+r-n)}^{\min(i,r)}
(-1)^{r-k}
\frac{\binom{n-r}{i-k}
\binom{2r}{r}\binom{2r}{2k}}{2^{2r}\binom{n}{i}}.$$
\end{corollary}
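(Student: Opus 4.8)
The plan is to obtain the stated formula directly from the degree-$r$ Bernstein representation in Theorem \ref{gen-jacinBer form} by elevating every basis element to degree $n$, rather than passing through the half-integer binomial form of Theorem \ref{trans}. First I would record the double-factorial simplification $\frac{(2r-1)!!}{(2r)!!}=\frac{(2r)!}{2^{2r}(r!)^{2}}$, which follows at once from \eqref{douvle-factrrial}; this puts the scalar in front of the first sum of \eqref{gen-jac-inBer-r} into the exact shape appearing in $\mu_{r,i}^{n}$.

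Next I would substitute the degree elevation \eqref{ber-elv} into \eqref{gen-jac-inBer-r}, replacing each $B_{k}^{r}(x)$ by $\sum_{i=k}^{n-r+k}\frac{\binom{r}{k}\binom{n-r}{i-k}}{\binom{n}{i}}B_{i}^{n}(x)$ and likewise each $B_{j}^{k}(x)$ in the Koornwinder correction. I would then interchange the order of summation so as to collect the coefficient of a fixed $B_{i}^{n}(x)$. The only bookkeeping subtlety is the inner summation range: for fixed $i$ the elevation index contributes only when $k\le i\le n-r+k$, which is precisely $\max(0,i+r-n)\le k\le\min(i,r)$, the limits displayed in $\mu_{r,i}^{n}$.

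The step that makes the formula collapse to its stated form is the cancellation $\eta_{k,r}\binom{r}{k}=\frac{\binom{2r}{r}\binom{2r}{2k}}{2^{2r}}$, obtained directly from $\eta_{k,r}=\frac{\binom{2r}{r}\binom{2r}{2k}}{2^{2r}\binom{r}{k}}$ given in Theorem \ref{gen-jacinBer form}. Carrying this through the collected first sum yields exactly $\mu_{r,i}^{n}$. I would finish by observing that the correction term is self-similar: the inner expression $\frac{(2k)!}{2^{2k}(k!)^{2}}\sum_{j}(-1)^{k-j}\eta_{j,k}B_{j}^{k}(x)$ is the degree-$k$ analogue of the first sum, so after the same elevation-and-interchange its $B_{i}^{n}$-coefficient is $\mu_{k,i}^{n}$; summing against $\lambda_{k}$ for $k=0,\dots,r$ gives $\sum_{k=0}^{r}\lambda_{k}\mu_{k,i}^{n}$, and adding the two contributions produces $N_{r,i}^{n}=\mu_{r,i}^{n}+\sum_{k=0}^{r}\lambda_{k}\mu_{k,i}^{n}$.

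I expect the main obstacle to be organizational rather than conceptual: keeping the first term and the $\lambda_{k}$-correction separate while interchanging summation order, and checking that the elevation limits $k=\max(0,i+r-n),\dots,\min(i,r)$ are reproduced identically in both. Once the self-similarity of the correction term is recognized, no new identity beyond the $\eta_{k,r}$ cancellation and the double-factorial simplification is needed, so the remainder is a routine rearrangement.
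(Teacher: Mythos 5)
Your proposal is correct, but it reaches the formula by a more direct route than the paper does. The paper's proof of this corollary starts from the expression for $N_{r,i}^{n}$ already obtained in Theorem \ref{trans}, which is written in terms of the half-integer binomials $\binom{r-\frac{1}{2}}{k}\binom{r-\frac{1}{2}}{r-k}$, and the entire content of its argument is the conversion of that product into $\binom{2r}{r}\binom{2r}{2k}/2^{2r}$ via double-factorial manipulations together with $(2r)!=(2r-1)!!\,2^{r}r!$. You instead bypass Theorem \ref{trans} altogether: you substitute the degree elevation \eqref{ber-elv} directly into the degree-$r$ representation \eqref{gen-jac-inBer-r}, interchange the summations, and invoke the cancellation $\eta_{k,r}\binom{r}{k}=\binom{2r}{r}\binom{2r}{2k}/2^{2r}$, which lands on $\mu_{r,i}^{n}$ without half-integer binomials ever appearing. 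Since the proof of Theorem \ref{trans} is itself exactly this elevation-and-interchange computation, the two arguments are the same calculation organized differently: yours is more self-contained, needing only Theorem \ref{gen-jacinBer form} and \eqref{ber-elv}, while the paper's reuses the already-established conversion matrix and isolates the binomial-conversion identity as the only new step. Your summation limits $\max(0,i+r-n)\le k\le\min(i,r)$, the prefactor identity $\frac{(2r-1)!!}{(2r)!!}=\frac{(2r)!}{2^{2r}(r!)^{2}}$, and the observation that the $\lambda_{k}$-correction term is the degree-$k$ analogue of the leading sum (hence contributes $\sum_{k=0}^{r}\lambda_{k}\mu_{k,i}^{n}$) all check out, so there is no gap.
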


\begin{proof}
From equation \eqref{fun-berns} From the proof of Theorem \ref{trans}, any generalized Chebyshev-I polynomials $\mathscr{T}_{r}^{(M,N)}(x)$
 of degree $r\leq n$ can be expressed in terms of Bernstein basis of fixed degree $n$ as 
$\mathscr{T}_{r}^{(M,N)}(x)=\sum_{i=0}^{n}N_{r,i}^{n}B_{i}^{n}(x), \hspace{.1in} r=0,1,\dots,n,$
where the matrix $N$ can be obtained by transposing the entries of the matrix $M$ defined in \eqref{gen-mu}. 
But,
\begin{align*}
\binom{r-\frac{1}{2}}{r-k}\binom{r-\frac{1}{2}}{k}&=\frac{(2r-1)!!}{2^{r}(r-k)!}\frac{2^{k}}{(2k-1)!!}\frac{(2r-1)!!}{2^{r}k!}\frac{2^{r-k}}{(2r-2k-1)!!}\\
&=\frac{1}{2^{r}(r-k)!k!}\frac{(2r-1)!!}{(2k-1)!!}\frac{(2r-1)!!}{(2(r-k)-1)!!}.
\end{align*}
Using the fact $(2r)!=(2r-1)!!2^{r}r!$ we get 
\begin{equation}
\frac{\binom{n-r}{i-k}\binom{r-\frac{1}{2}}{r-k}\binom{r-\frac{1}{2}}{k}}{\binom{n}{i}}=\frac{\binom{n-r}{i-k}\binom{2r}{r}\binom{2r}{2k}}{2^{2r}\binom{n}{i}}.
\end{equation}
So, the entries $N_{r,i}^{n}$ can be rewritten as
\begin{equation*}
N_{r,i}^{n}=
\mu_{r,i}^{n}
+\sum_{k=0}^{r}\lambda_{k}
\mu_{k,i}^{n},
\end{equation*}
where $$\mu_{r,i}^{n}=\frac{(2r)!}{2^{2r}(r!)^{2}}\sum_{k=\max(0,i+r-n)}^{\min(i,r)}
(-1)^{r-k}
\frac{\binom{n-r}{i-k}
\binom{2r}{r}\binom{2r}{2k}}{2^{2r}\binom{n}{i}}.$$
\end{proof}

\subsection{Bernstein to generalized Chebyshev-I} Farouki \cite{Farouki4}  discussed some analytic and geometric properties for Bernstein polynomials. It is worth mentioning the product of two Bernstein polynomials is a Bernstein polynomial and given by
$B_{i}^{n}(x)B_{j}^{m}(x)=\frac{\binom{n}{i}\binom{m}{j}}{\binom{n+m}{i+j}}B_{i+j}^{n+m}(x),$ and the Bernstein polynomials can be integrated easily as $\int_{0}^{1}B_{k}^{n}(x)dx=\frac{1}{n+1},\hspace{.1in} k=0,1,\dots,n.$ In addition, we have the following Lemma that will be used in the proof of next theorem.

\begin{lemma}\label{gen-int-ber-jac}\cite{alqudah:Characterization}(M. AlQudah)
Let $B_{r}^{n}(x)$ be the Bernstein polynomial of degree $n$ and $\mathscr{T}_{i}^{(M,N)}(x)$ be the generalized Chebyshev-I polynomial of degree $i,$ then
for $i,r=0,1,\dots,n$ we have
\begin{equation*}
\begin{aligned}
&\int_{0}^{1}(1-x)^{-\frac{1}{2}}x^{-\frac{1}{2}}B_{r}^{n}(x)\mathscr{T}_{i}^{(M,N)}(x)dx\\
&=\binom{n}{r}\frac{(2i)!}{2^{2i}(i!)^{2}}\sum_{k=0}^{i}(-1)^{i-k}\binom{i-\frac{1}{2}}{k}\binom{i-\frac{1}{2}}{i-k}
B(r+k+\frac{1}{2},n+i-r-k+\frac{1}{2})\\
&+\sum_{d=0}^{i}\lambda_{d}\binom{n}{r}\frac{(2d)!}{2^{2d}(d!)^{2}}\sum_{j=0}^{d}(-1)^{d-j}
\binom{d-\frac{1}{2}}{j}\binom{d-\frac{1}{2}}{d-j}
B(r+j+\frac{1}{2},n+d-r-j+\frac{1}{2})
\end{aligned}
\end{equation*}
where $B(x,y)$ is the Beta function.
\end{lemma}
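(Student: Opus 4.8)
The plan is to substitute the Bernstein representation of $\mathscr{T}_{i}^{(M,N)}(x)$ furnished by Theorem~\ref{gen-jacinBer form} directly into the weighted integral, interchange the finite summations with the integral by linearity, and thereby reduce the entire computation to a single elementary building block: the weighted integral of a product of two Bernstein polynomials. Writing $\mathscr{T}_{i}^{(M,N)}(x)$ as in \eqref{gen-jac-inBer-r}, the integral splits into a main part carrying the coefficient $\frac{(2i-1)!!}{(2i)!!}$ together with the basis polynomials $B_{k}^{i}(x)$, and a perturbation part carrying the coefficients $\frac{(2d)!\lambda_{d}}{2^{2d}(d!)^{2}}$ together with the $B_{j}^{d}(x)$. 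I would treat both parts in exactly the same way, with the sign factors $(-1)^{i-k}$ and $(-1)^{d-j}$ merely riding along as scalars.

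For the building block I would use the explicit forms $B_{r}^{n}(x)=\binom{n}{r}x^{r}(1-x)^{n-r}$ and $B_{k}^{d}(x)=\binom{d}{k}x^{k}(1-x)^{d-k}$. Multiplying by the weight $(1-x)^{-1/2}x^{-1/2}$ collapses the product into a single power,
\begin{equation*}
(1-x)^{-\frac{1}{2}}x^{-\frac{1}{2}}B_{r}^{n}(x)B_{k}^{d}(x)=\binom{n}{r}\binom{d}{k}x^{r+k-\frac{1}{2}}(1-x)^{n+d-r-k-\frac{1}{2}},
\end{equation*}
whose integral over $[0,1]$ is, by the very definition of the Beta function, exactly $\binom{n}{r}\binom{d}{k}B(r+k+\tfrac{1}{2},\,n+d-r-k+\tfrac{1}{2})$. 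Applying this to every summand of the main and perturbation parts produces the desired Beta-function terms, with the common factor $\binom{n}{r}$ pulled out front as in the statement.

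The one genuinely non-routine step is to recast the coefficients into the stated half-integer binomial form. Each summand of the main part arrives carrying $\eta_{k,i}$ from \eqref{gen-jac-inBer-r} multiplied by the new factor $\binom{i}{k}$ produced by the building block, and the product $\eta_{k,i}\binom{i}{k}=\frac{\binom{2i}{i}\binom{2i}{2k}}{2^{2i}}$ is precisely $\binom{i-\frac{1}{2}}{k}\binom{i-\frac{1}{2}}{i-k}$ by the double-factorial identity already established in the proof of Corollary~\ref{gen-rabab2}; the same identity, with $i$ replaced by $d$ and $k$ by $j$, disposes of the perturbation part. Combined with the elementary simplification $\frac{(2i-1)!!}{(2i)!!}=\frac{(2i)!}{2^{2i}(i!)^{2}}$, this converts each coefficient into the shape appearing on the right-hand side.

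Thus the only real obstacle is the algebraic identification $\eta_{k,i}\binom{i}{k}=\binom{i-\frac{1}{2}}{k}\binom{i-\frac{1}{2}}{i-k}$, which is not new here: it is the same double-factorial manipulation carried out in Corollary~\ref{gen-rabab2}. Everything else is the interchange of finite sums with the integral, the elementary monomial integral yielding the Beta function, and careful bookkeeping of the summation indices $k$, $j$, and $d$.
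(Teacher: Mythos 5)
Your proof is correct: substituting the Bernstein representation \eqref{gen-jac-inBer-r} into the integral, evaluating $\int_{0}^{1}(1-x)^{-1/2}x^{-1/2}B_{r}^{n}(x)B_{k}^{i}(x)\,dx=\binom{n}{r}\binom{i}{k}B(r+k+\tfrac12,n+i-r-k+\tfrac12)$, and using $\eta_{k,i}\binom{i}{k}=\frac{\binom{2i}{i}\binom{2i}{2k}}{2^{2i}}=\binom{i-\frac12}{k}\binom{i-\frac12}{i-k}$ together with $\frac{(2i-1)!!}{(2i)!!}=\frac{(2i)!}{2^{2i}(i!)^{2}}$ reproduces the stated formula exactly. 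The paper itself gives no argument here --- it only cites \cite{alqudah:Characterization} --- and your derivation is the natural one that the cited source follows, so there is nothing further to compare.
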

\begin{proof}
See \cite{alqudah:Characterization} for the proof.
\end{proof}

\begin{theorem}
The entries $M_{i,r}^{n^{-1}},i,r=0,1,\dots,n$ of the matrix of transformation of the Bernstein polynomial basis into the generalized Chebyshev-I polynomial
basis of degree $n$ are given by
\begin{equation}
\begin{aligned}
M_{i,r}^{n^{-1}}=\frac{\Phi_{i}}{(1+\lambda_{i})^{2}}\binom{n}{r}
\left[\frac{2^{2i}(i!)^{2}}{(2i)!}
\Psi_{k,i}^{n,r}
+\left(\frac{2^{2i}(i!)^{2}}{(2i)!}\right)^{2}
\sum_{d=0}^{i}
\frac{(2d)!\lambda_{d}}{2^{2d}(d!)^{2}}
\Psi_{j,d}^{n,r}\right],
\end{aligned}
\end{equation}
where $\lambda_{i}$ defined in \eqref{lmda}, $\Phi_{i}=\left\{\begin{array}{ll} 
2/\pi & \mbox{if } i=0\\
1/\pi & \mbox{if } i\neq 0
\end{array}\right.,$
$$\Psi_{k,i}^{n,r}=\sum_{k=0}^{i}
\frac{(-1)^{i-k}}{2^{2i}}\binom{2i}{i}\binom{2i}{2k}
B(r+k+\frac{1}{2},n+i-r-k+\frac{1}{2}),$$
and $B(x,y)$ is the Beta function.
\end{theorem}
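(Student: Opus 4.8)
The plan is to avoid inverting the matrix of Theorem~\ref{trans} directly and instead to use the orthogonality of the family $\{\mathscr{T}_{i}^{(M,N)}\}_{i=0}^{n}$. Because these polynomials form a basis for the space of polynomials of degree at most $n$, every Bernstein polynomial has a unique expansion
\begin{equation*}
B_{r}^{n}(x)=\sum_{i=0}^{n}M_{i,r}^{n^{-1}}\,\mathscr{T}_{i}^{(M,N)}(x),\qquad r=0,1,\dots,n,
\end{equation*}
whose coefficients are by definition the entries of the inverse transformation. Pairing both sides with $\mathscr{T}_{i}^{(M,N)}$ in the inner product under which the generalized Chebyshev-I polynomials are orthogonal annihilates every term on the right except the $i$-th, so that
\begin{equation*}
M_{i,r}^{n^{-1}}=\frac{\bigl\langle B_{r}^{n},\,\mathscr{T}_{i}^{(M,N)}\bigr\rangle}{\bigl\langle \mathscr{T}_{i}^{(M,N)},\,\mathscr{T}_{i}^{(M,N)}\bigr\rangle}.
\end{equation*}
The whole proof then reduces to evaluating the numerator and the denominator of this quotient.

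The numerator is exactly the weighted integral computed in Lemma~\ref{gen-int-ber-jac}. I would substitute that expression and then replace each product of generalized binomial coefficients by elementary ones through the identity $\binom{i-\frac12}{k}\binom{i-\frac12}{i-k}=2^{-2i}\binom{2i}{i}\binom{2i}{2k}$ established in the proof of Corollary~\ref{gen-rabab2}. This turns the two inner sums of Lemma~\ref{gen-int-ber-jac} into the quantities $\Psi_{k,i}^{n,r}$ and $\Psi_{j,d}^{n,r}$ of the statement, yielding
\begin{equation*}
\bigl\langle B_{r}^{n},\,\mathscr{T}_{i}^{(M,N)}\bigr\rangle=\binom{n}{r}\left[\frac{(2i)!}{2^{2i}(i!)^{2}}\,\Psi_{k,i}^{n,r}+\sum_{d=0}^{i}\frac{(2d)!\lambda_{d}}{2^{2d}(d!)^{2}}\,\Psi_{j,d}^{n,r}\right].
\end{equation*}

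The substantive step is the normalization constant $\bigl\langle \mathscr{T}_{i}^{(M,N)},\mathscr{T}_{i}^{(M,N)}\bigr\rangle$. Reading the leading behaviour off \eqref{gen-Chebyshev-I-ccomb}, the coefficient of $T_{i}$ in $\mathscr{T}_{i}^{(M,N)}$ is $\tfrac{(2i-1)!!}{(2i)!!}(1+\lambda_{i})=\tfrac{(2i)!}{2^{2i}(i!)^{2}}(1+\lambda_{i})$, the $T_{i}$ terms of the two summands combining. Exploiting orthogonality of $\mathscr{T}_{i}^{(M,N)}$ to all polynomials of lower degree, I would collapse $\bigl\langle \mathscr{T}_{i}^{(M,N)},\mathscr{T}_{i}^{(M,N)}\bigr\rangle$ to the leading coefficient times $\bigl\langle \mathscr{T}_{i}^{(M,N)},T_{i}\bigr\rangle$, and evaluate the latter through the weighted $L^{2}$ norm of the Chebyshev polynomial, which after the affine change of variable to $[-1,1]$ is $\int_{-1}^{1}T_{i}^{2}(t)(1-t^{2})^{-1/2}\,dt=\pi$ when $i=0$ and $\pi/2$ otherwise. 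The square of the leading coefficient supplies the factors $(1+\lambda_{i})^{2}$ and $\bigl(\tfrac{(2i)!}{2^{2i}(i!)^{2}}\bigr)^{2}$, while this Chebyshev normalization is the source of the piecewise constant $\Phi_{i}$, so that altogether $\bigl\langle \mathscr{T}_{i}^{(M,N)},\mathscr{T}_{i}^{(M,N)}\bigr\rangle=\Phi_{i}^{-1}(1+\lambda_{i})^{2}\bigl(\tfrac{(2i)!}{2^{2i}(i!)^{2}}\bigr)^{2}$. I expect this to be the main obstacle: one must track the two endpoint Dirac point masses in the Koornwinder measure and verify that the prescription \eqref{lmda} for $\lambda_{k}$ is precisely what keeps the pairings against the lower-degree $T_{k}$ under control, so that the constant simplifies to this clean closed form.

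Dividing the numerator by this constant finishes the argument. Factoring $\bigl(\tfrac{2^{2i}(i!)^{2}}{(2i)!}\bigr)^{2}$ out of the reciprocal normalization and distributing it across the bracket rewrites $\tfrac{(2i)!}{2^{2i}(i!)^{2}}\Psi_{k,i}^{n,r}$ as $\tfrac{2^{2i}(i!)^{2}}{(2i)!}\Psi_{k,i}^{n,r}$ and leaves the $d$-sum carrying the squared prefactor, reproducing exactly the expression displayed in the statement. The remaining manipulations are routine, so the crux of the proof is the orthogonality reduction together with the correct evaluation of the normalization constant $\Phi_{i}^{-1}(1+\lambda_{i})^{2}\bigl(\tfrac{(2i)!}{2^{2i}(i!)^{2}}\bigr)^{2}$.
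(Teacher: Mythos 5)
Your proposal follows essentially the same route as the paper: expand $B_{r}^{n}$ in the basis $\{\mathscr{T}_{i}^{(M,N)}\}_{i=0}^{n}$, pair against $\mathscr{T}_{i}^{(M,N)}$, kill the off-diagonal terms by orthogonality, evaluate the numerator via Lemma \ref{gen-int-ber-jac} together with the identity $\binom{i-\frac12}{k}\binom{i-\frac12}{i-k}=2^{-2i}\binom{2i}{i}\binom{2i}{2k}$, and divide by the normalization $\Phi_{i}^{-1}(1+\lambda_{i})^{2}\bigl(\tfrac{(2i)!}{2^{2i}(i!)^{2}}\bigr)^{2}$. The one point of divergence is that you pair in the full Koornwinder measure (including the Dirac point masses), which would add endpoint terms to the numerator that Lemma \ref{gen-int-ber-jac} does not contain, whereas the paper pairs only against the weight $(x-x^{2})^{-1/2}\,dx$ and invokes the classical Chebyshev orthogonality relation to produce the same diagonal constant.
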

\begin{proof}
To write the Bernstein polynomial basis into generalized Chebyshev-I polynomial basis of degree $n,$ we invert the transformation $c_{i}=\sum_{r=0}^{n}M_{i,r}^{n}d_{r}$ to get
\begin{equation}d=M^{-1}c.\end{equation}
Let $M_{i,r}^{n^{-1}}, N_{i,r}^{n^{-1}},$ $i,r=0,\dots,n$ be the entries of $M^{-1}$ and $N^{-1}$ respectively. The transformation of Bernstein polynomial into
generalized Chebyshev-I polynomial basis of degree $n$ can then be written as
\begin{equation}\label{gen-Ber-Jac}B_{r}^{n}(x)=\sum_{i=0}^{n}N_{r,i}^{n^{-1}}\mathscr{T}_{i}^{(M,N)}(x).\end{equation}

The entries $N_{r,i}^{n^{-1}},$ $i,r=0,1,\dots,n$ can be found by we multiply \eqref{gen-Ber-Jac} by 
$(x-x^{2})^{-\frac{1}{2}}\mathscr{T}_{i}^{(M,N)}(x)$ and integrate
over $[0,1]$ to get
\begin{equation}
\int_{0}^{1}(x-x^{2})^{-\frac{1}{2}}B_{r}^{n}(x)\mathscr{T}_{i}^{(M,N)}(x)dx
=\sum_{i=0}^{n}N_{r,i}^{n^{-1}}\int_{0}^{1}(x-x^{2})^{-\frac{1}{2}}\mathscr{T}_{i}^{(M,N)}(x)\mathscr{T}_{i}^{(M,N)}(x)dx.
\end{equation} 
By using the orthogonality relation 
\begin{equation*}\label{orth-rel}
\int_{0}^{1}(1-x)^{-\frac{1}{2}}x^{-\frac{1}{2}}T_{n}(x)T_{m}(x)dx=\left\{\begin{array}{ll} 
0& \mbox{if } m\neq n \\
\frac{\pi}{2} & \mbox{if } m=n=0\\
\pi & \mbox{if } m=n=1,2,\dots
\end{array}\right.
\end{equation*}
we get

\begin{equation*}
\int_{0}^{1}(x-x^{2})^{-\frac{1}{2}}B_{r}^{n}(x)\mathscr{T}_{i}^{(M,N)}(x)dx
=\left\{\begin{array}{ll} 
\frac{\pi}{2}N_{r,i}^{n^{-1}}\left(\frac{(2i)!}{2^{2i}(i!)^{2}}\right)^{2}(1+\lambda_{i})^{2} & \mbox{if } i=0\\
\pi N_{r,i}^{n^{-1}}\left(\frac{(2i)!}{2^{2i}(i!)^{2}}\right)^{2}(1+\lambda_{i})^{2} & \mbox{if } i\neq 0
\end{array}\right..
\end{equation*} 
Taking into account Theorem \ref{gen-int-ber-jac}, we obtain
\begin{equation*}
\begin{aligned}
N&_{r,i}^{n^{-1}}=\frac{\Phi_{i}\binom{n}{r}}{(1+\lambda_{i})^{2}}
[\frac{2^{2i}(i!)^{2}}{(2i)!}
\sum_{k=0}^{i}
\frac{(-1)^{i-k}}{2^{2i}}\binom{2i}{i}\binom{2i}{2k}
B(r+k+\frac{1}{2},n+i-r-k+\frac{1}{2})\\
&+\left(\frac{2^{2i}(i!)^{2}}{(2i)!}\right)^{2}
\sum_{d=0}^{i}
\frac{(2d)!\lambda_{d}}{2^{2d}(d!)^{2}}
\sum_{j=0}^{d}
\frac{(-1)^{d-j}}{2^{2d}}\binom{2d}{d}\binom{2d}{2j}
B(r+j+\frac{1}{2},n+d-r-j+\frac{1}{2})].
\end{aligned}
\end{equation*}
By reordering the terms we have
\begin{equation}
\begin{aligned}
N_{r,i}^{n^{-1}}=\frac{\Phi_{i}}{(1+\lambda_{i})^{2}}\binom{n}{r}
\left[\frac{2^{2i}(i!)^{2}}{(2i)!}
\Psi_{k,i}^{n,r}
+\left(\frac{2^{2i}(i!)^{2}}{(2i)!}\right)^{2}
\sum_{d=0}^{i}
\frac{(2d)!\lambda_{d}}{2^{2d}(d!)^{2}}
\Psi_{j,d}^{n,r}\right],
\end{aligned}
\end{equation}
where $\Phi_{i}=\left\{\begin{array}{ll} 
2/\pi & \mbox{if } i=0\\
1/\pi & \mbox{if } i\neq 0
\end{array}\right.,$ $\lambda_{i}$ defined in \eqref{lmda},
$$\Psi_{k,i}^{n,r}=\sum_{k=0}^{i}
\frac{(-1)^{i-k}}{2^{2i}}\binom{2i}{i}\binom{2i}{2k}
B(r+k+\frac{1}{2},n+i-r-k+\frac{1}{2}),$$
and $B(x,y)$ is the Beta function.
The entries of $M^{-1}$ are obtained by transposition of $N^{-1}.$ 
\end{proof}

%\acknowledgements{\rm We thank the anonymous referees for their fruitful suggestions, which immensely helped to improve the presentation of the paper. }

\end{document}